\newtheorem{lemma}{Lemma}
\theoremstyle{definition}
\newtheorem{example}{Example}
\newcommand\A{{\mathbb A}}
\newcommand\Z{{\mathbb Z}}
\newcommand\al{\alpha}
\newcommand{\be}{\beta}
\newcommand\la{\lambda}
\newcommand\ssm{\smallsetminus}
\newcommand\noin{\noindent}
\newcommand\eqto{\stackrel{\lower1.5pt\hbox{$\scriptstyle\sim\,$}}\to}
\begin{document}

\title[The theory of Schur polynomials revisited]
{The theory of Schur polynomials revisited}

\date{December 10, 2011}

\author{Harry~Tamvakis} \address{University of Maryland, Department of
Mathematics, 1301 Mathematics Building, College Park, MD 20742, USA}
\email{harryt@math.umd.edu}

\subjclass[2000]{Primary 05E05; Secondary 14N15}

\thanks{The author was supported in part by NSF Grant DMS-0901341.}

\begin{abstract}
We use Young's raising operators to give short and uniform proofs of
several well known results about Schur polynomials and symmetric
functions, starting from the Jacobi-Trudi identity.
\end{abstract}

\maketitle

\section{Introduction}

One of the earliest papers to study the symmetric functions later
known as the Schur polynomials $s_\la$ is that of Jacobi \cite{J},
where the following two formulas are found. The first is Cauchy's
definition of $s_\la$ as a quotient of determinants:
\begin{equation}
\label{def1}
s_\la(x_1,\ldots,x_n) =
\left.\det(x_i^{\la_i+n-j})_{i,j}\right\slash\det(x_i^{n-j})_{i,j}
\end{equation}
where $\la=(\la_1,\ldots,\la_n)$ is an integer partition with at most
$n$ non-zero parts. The second is the Jacobi-Trudi identity
\begin{equation}
\label{def2}
s_\la = \det(h_{\la_i+j-i})_{1\leq i,j \leq n}
\end{equation}
which expresses $s_\la$ as a polynomial in the complete symmetric
functions $h_r$, $r\geq 0$. Nearly a century later, Littlewood
\cite{L} obtained the positive combinatorial expansion
\begin{equation}
\label{def3}
s_\la(x) = \sum_T x^{c(T)}
\end{equation}
where the sum is over all semistandard Young tableaux $T$ of shape
$\la$, and $c(T)$ denotes the content vector of $T$.

The traditional approach to the theory of Schur polynomials begins
with the classical definition (\ref{def1}); see for example \cite{FH,
M, Ma}. Since equation (\ref{def1}) is a special case of the Weyl
character formula, this method is particularly suitable for
applications to representation theory. The more combinatorial
treatments \cite{Sa, Sta} use (\ref{def3}) as the definition of
$s_\la(x)$, and proceed from there. It is not hard to relate formulas
(\ref{def1}) and (\ref{def3}) to each other directly; see e.g.\
\cite{Pr, Ste}. 

In this article, we take the Jacobi-Trudi formula (\ref{def2}) as the
starting point, where the $h_r$ represent algebraically independent
variables. We avoid the use of the $x$ variables or `alphabets' and
try to prove as much as we can without them. For this purpose, it
turns out to be very useful to express (\ref{def2}) in the alternative
form
\begin{equation}
\label{def2r}
s_\la = \prod_{i<j} (1-R_{ij}) \, h_\la
\end{equation}
where the $R_{ij}$ are Young's raising operators \cite{Y} and $h_\la =
h_{\la_1}h_{\la_2}\cdots h_{\la_n}$. The equivalence of (\ref{def2}) and 
(\ref{def2r}) follows immediately from the Vandermonde identity. 

The motivation for this approach to the subject comes from Schubert
calculus. It is well known that the algebra of Schur polynomials
agrees with that of the Schubert classes in the cohomology ring of the
complex Grassmannian $G(k,r)$, when $k$ and $r$ are sufficiently
large. Giambelli \cite{G} showed that the Schubert classes on $G(k,r)$
satisfy the determinantal formula (\ref{def2}); the closely related
Pieri rule \cite{P} had been obtained geometrically a few years
earlier.  Recently, with Buch and Kresch \cite{BKT1, BKT2}, we proved
analogues of the Pieri and Giambelli formulas for the isotropic
Grassmannians which are quotients of the symplectic and orthogonal
groups. Our Giambelli formulas for the Schubert classes on these
spaces are not determinantal, but rather are stated in terms of
raising operators. In \cite{T}, we used raising operators to obtain a
tableau formula for the corresponding theta polynomials, which is an
analogue of Littlewood's equation (\ref{def3}) in this
context. Moreover, the same methods were applied in loc.\ cit.\ to
provide new proofs of similar facts about the Hall-Littlewood functions.

Our aim here is to give a self-contained treatment of those aspects of
the theory of Schur polynomials and symmetric functions which follow
naturally from the above raising operator approach. Using
(\ref{def2r}) as the definition of Schur polynomials, we give short
proofs of the Pieri and Littlewood-Richardson rules, and follow this
with a discussion -- in this setting -- of the duality involution,
Cauchy identities, and skew Schur polynomials. We next introduce the
variables $x=(x_1,x_2,\ldots)$ and study the ring $\Lambda$ of
symmetric functions in $x$ from scratch.  In particular, we derive the
bialternant and tableau formulas (\ref{def1}) and (\ref{def3}) for
$s_\la(x)$. See \cite{La} for an approach to these topics which begins
with (\ref{def2}) but is based on alphabets and properties of
determinants such as the Binet-Cauchy formula, and \cite{vL, Ste} for
a different treatment which employs alternating sums stemming from
(\ref{def1}).

Most of the proofs in this article are streamlined versions of more
involved arguments contained in \cite{BKT2}, \cite{M}, and
\cite{T}. The proof we give of the Littlewood-Richardson rule from the
Pieri rule is essentially that of Remmel-Shimozono \cite{RS} and
Gasharov \cite{G}, but expressed in the concise form adapted by
Stembridge \cite{Ste}. Each of these proofs employs the same sign
reversing involution on a certain set of Young tableaux, which
originates in the work of Berenstein-Zelevinsky \cite{BZ}. The version
given here does not use formulas (\ref{def1}) and (\ref{def3}) at all,
but relies on the alternating property of the determinant
(\ref{def2}), which serves the same purpose.

The reduction formula (\ref{reduction}) for the number of variables in
$s_\la(x_1,\ldots,x_n)$ is classically known as a `branching rule' for
the characters of the general linear group \cite{Pr, W}. Our
terminology differs because there are similar results in
situations where the connection with representation theory is not
available (see \cite{T}). We use the reduction formula to derive
(\ref{def3}) from (\ref{def2r}); a different cancellation argument
relating formulas (\ref{def2}) and (\ref{def3}) to each other is due
to Gessel-Viennot \cite{GV, Sa}.

We find that the short arguments in this article are quite uniform,
especially when compared to other treatments of the same material. On
the other hand, much of the theory of Schur polynomials does not
readily fit into the present framework. Missing from the discussion
are the Hall inner product, the Hopf algebra structure on $\Lambda$,
the basis of power sums, the character theory of the symmetric and
general linear groups, Young tableau algorithms such as jeu de taquin,
the plactic algebra, and noncommutative symmetric functions. These
topics and many more can be added following standard references such
as \cite{F, La, M, Ma, Sa, Sta, Z}, but are not as natural from the
point of view adopted here, which stems from Grassmannian Schubert
calculus. A similar approach may be used to study the theory of Schur
$Q$-polynomials and more generally of Hall-Littlewood functions; some
of this story may be found in \cite{T}.

The author is indebted to his collaborators Anders
Buch and Andrew Kresch for their efforts on the related projects
\cite{BKT1, BKT2}.

\section{The algebra of Schur polynomials}
\label{typeA}

\subsection{Preliminaries} An {\em integer sequence} or 
{\em integer vector} is a sequence of integers
$\al=(\al_1,\al_2,\ldots)$ with only finitely $\al_i$ non-zero. The
{\em length} of $\al$, denoted $\ell(\al)$, is largest integer
$\ell\geq 0$ such that $\al_\ell\neq 0$. We identify an integer
sequence of length $\ell$ with the vector consisting of its first
$\ell$ terms.  We let $|\al| = \sum \al_i$ and write $\al\geq \be$ if
$\al_i \geq \be_i$ for each $i$.  An integer sequence $\al$ is a {\em
composition} if $\al_i\geq 0$ for all $i$ and a {\em partition} if
$\al_i \geq \al_{i+1}\geq 0$ for all $i$.

Consider the polynomial ring $\A=\Z[u_1,u_2,\ldots]$ where the $u_i$
are countably infinite commuting independent variables. We regard $\A$
as a graded ring with each $u_i$ having graded degree $i$, and
adopt the convention here and throughout the paper that $u_0=1$ while
$u_r=0$ for $r<0$. For each integer vector $\al$, set $u_{\al} =
\prod_iu_{\al_i}$; then $\A$ has a free $\Z$-basis consisting of the
monomials $u_{\la}$ for all partitions $\la$.

For two integer sequences $\al$, $\be$ such that $|\al|= |\be|$, we say 
that $\al$ {\em dominates} $\be$
and write $\al\succeq \be$ if $\al_1+\cdots + \al_i\geq \be_1+\cdots
+\be_i$ for each $i$.  Given any integer sequence
$\alpha=(\alpha_1,\alpha_2,\ldots)$ and $i<j$, we define
\[
R_{ij}(\alpha) = (\alpha_1,\ldots,\alpha_i+1,\ldots,\alpha_j-1,
\ldots).
\] 
A {\em raising operator} $R$ is any monomial in these $R_{ij}$'s.
Note that we have $R\,\al\succeq \al$ for all integer sequences $\al$.
For any raising operator $R$, define $R\,u_{\al} = u_{R\al}$. Here the
operator $R$ acts on the index $\al$, and not on the monomial $u_\al$
itself. Thus, if the components of $\al$ are a permutation of the
components of $\be$, then $u_\al = u_\be$ as elements of $\A$, but it
may happen that $R\,u_{\al} \neq R\, u_{\be}$.  Formal manipulations
using these raising operators are justified carefully in the
following section.  Note that if $\al_\ell<0$ for $\ell=\ell(\al)$,
then $R\,u_\al=0$ in $\A$ for any raising operator $R$.

\subsection{Schur polynomials}
For any integer vector $\al$, define the {\em Schur polynomial}
$U_{\al}$ by the formula
\begin{equation}
\label{giambelliA}
U_{\al} := \prod_{i<j} (1-R_{ij})\, u_{\al}.
\end{equation}
Although the product in (\ref{giambelliA}) is infinite, if we expand
it into a formal series we find that only finitely many of the summands
are nonzero; hence, $U_{\al}$ is well defined. We will show that
equation (\ref{giambelliA}) may be written in the determinantal form
\begin{equation}
\label{giambelliA2}
U_{\al} = \det(u_{\al_i+j-i})_{1\leq i,j\leq \ell} =
\sum_{w\in S_\ell}(-1)^w u_{w(\al+\rho_\ell)-\rho_\ell}
\end{equation}
where $\ell$ denotes the length of $\al$ and $\rho_\ell =
(\ell-1,\ell-2,\ldots,1,0)$.

Algebraic expressions and identities involving raising operators like
the above can be justified by viewing them as the image of a
$\Z$-linear map $\Z[\Z^\ell]\to\A$, where $\Z[\Z^\ell]$ denotes the
group algebra of $(\Z^\ell,+)$. We let $x_1,\ldots,x_\ell$ be
independent variables and identify $\Z[\Z^\ell]$ with
$\Z[x_1,x_1^{-1},\ldots,x_\ell,x_\ell^{-1}]$. For any integer vector
$\al=(\al_1,\ldots,\al_\ell)$ and raising operator $R$, set $x^\al =
x_1^{\al_1}\cdots x_\ell^{\al_\ell}$ and $R\,x^\al = x^{R\al}$. Then if
$\psi:\Z[\Z^\ell]\to\A$ is the $\Z$-linear map determined by
$\psi(x^\al) = u_\al$ for each $\al$, we have $R\, u_\al =
\psi(x^{R\al})$. It follows from the Vandermonde identity
\[
\prod_{1\leq i < j \leq \ell}(x_j - x_i) =
\det (x_i^{j-1})_{1\leq i,j\leq \ell} 
\]
that 
\[
\prod_{1\leq i<j\leq \ell} (1-R_{ij})\, x^{\al} = 
\prod_{1\leq i<j\leq \ell} (1-x_ix_j^{-1})\, x^{\al} = 
\det (x_i^{\al_i+j-i})_{1\leq i,j\leq \ell}.
\]
Now apply the map $\psi$ to both ends of the above
equation to obtain (\ref{giambelliA2}).

\begin{example}
We have
\begin{gather*}
U_{(5,4,2)}=(1-R_{12})(1-R_{13})(1-R_{23})\, u_{(5,4,2)} \\
= (1-R_{12}-R_{13}-R_{23}+R_{12}R_{13}+R_{12}R_{23}+R_{13}R_{23}-R_{12}R_{13}R_{23})
\, u_{(5,4,2)} \\
= u_{(5,4,2)}-u_{(6,3,2)}-u_{(6,4,1)}-u_{(5,5,1)}+u_{(7,3,1)}+u_{(6,4,1)} 
+u_{(6,5,0)}-u_{(7,4,0)}
 \\ 
=u_5u_4u_2-u_6u_3u_2-u_5^2u_1+u_7u_3u_1+u_6u_5-u_7u_4
= \left|\begin{array}{ccc}
u_5 & u_6 & u_7 \\ u_3 & u_4 & u_5 \\ 1 & u_1 & u_2
\end{array}\right|.
\end{gather*}
\end{example}

If $\al=(\al_1,\ldots,\al_\ell)$ and $\be=(\be_1,\ldots,\be_m)$ are
two integer vectors and $r,s\in \Z$, we let $(\al,r,s,\be)$ denote the
integer vector $(\al_1,\ldots,\al_\ell, r, s,
\be_1,\ldots,\be_m)$. The next lemma is known as a `straightening law'
for the $U_\al$.

\begin{lemma}
\label{altlem}
{\em (a)} Let $\al$ and $\be$ be integer vectors.  
Then for any $r,s\in\Z$ we have
\[ U_{(\al,r,s,\be)} = - U_{(\al,s-1,r+1,\be)} \,. \]

\medskip
\noin {\em (b)} Let $\al=(\al_1,\ldots,\al_\ell)$ be any integer
vector.  Then $U_\al=0$ unless $\al+\rho_\ell = w(\mu+\rho_\ell)$ for
a (unique) permutation $w\in S_\ell$ and partition $\mu$. In the
latter case, we have $U_\al = (-1)^w U_\mu$.
\end{lemma}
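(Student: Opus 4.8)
The plan is to derive both parts from the determinantal expression (\ref{giambelliA2}), namely $U_\al = \det(u_{\al_i+j-i})_{1\leq i,j\leq \ell}$, exploiting the alternating property of the determinant in its rows.

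For part (a), write $\gamma=(\al,r,s,\be)$ and set $p=\ell(\al)+1$, so that $r$ occupies row $p$ and $s$ occupies row $p+1$ of the matrix $(u_{\gamma_i+j-i})$. I would interchange these two adjacent rows, which negates the determinant. The new $(p,j)$ entry is $u_{s+j-(p+1)}$ and the new $(p+1,j)$ entry is $u_{r+j-p}$; rewriting these in the standard form $u_{\al'_p+j-p}$ and $u_{\al'_{p+1}+j-(p+1)}$ forces $\al'_p=s-1$ and $\al'_{p+1}=r+1$, while all other rows are unchanged. Thus the row-swapped matrix is exactly the defining matrix of $U_{(\al,s-1,r+1,\be)}$, and the identity $U_{(\al,r,s,\be)}=-U_{(\al,s-1,r+1,\be)}$ follows at once. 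The one point requiring care --- and the real content of the computation --- is tracking the $j-i$ offsets so that swapping rows produces the shift $(r,s)\mapsto(s-1,r+1)$ rather than the naive transposition $(r,s)\mapsto(s,r)$.

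For part (b), the key observation is that the substitution $\gamma:=\al+\rho_\ell$ converts the operation of part (a) into an ordinary adjacent transposition of the coordinates of $\gamma$, still carrying the sign $-1$: passing from $(\al,r,s,\be)$ to $(\al,s-1,r+1,\be)$ simply interchanges the two corresponding entries of $\al+\rho_\ell$. Since adjacent transpositions generate $S_\ell$, iterating part (a) yields $U_\al=(-1)^w U_\mu$ whenever $\mu+\rho_\ell=w^{-1}(\al+\rho_\ell)$, for every $w\in S_\ell$. I would then split into cases according to the entries of $\gamma$. If two entries coincide, say $\gamma_i=\gamma_j$ with $i\neq j$, then $\al_i-i=\al_j-j$, so rows $i$ and $j$ of the matrix in (\ref{giambelliA2}) are identical and $U_\al=0$. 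If the entries of $\gamma$ are distinct, there is a unique $w\in S_\ell$ for which $w^{-1}\gamma$ is strictly decreasing; setting $\mu:=w^{-1}\gamma-\rho_\ell$, the iteration above gives $U_\al=(-1)^w U_\mu$, and the uniqueness of $w$ is immediate since $\gamma$ has distinct entries.

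It remains to confirm that this $\mu$ behaves as stated. Because $\rho_\ell=(\ell-1,\ldots,1,0)$ decreases by steps of one, the vector $\mu$ is a partition precisely when $w^{-1}\gamma$ is strictly decreasing with nonnegative last entry, that is, exactly when $\al+\rho_\ell=w(\mu+\rho_\ell)$ holds with $\mu$ a partition; this is the asserted conclusion. In the remaining subcase $\mu_\ell<0$, every entry of the last row of $(u_{\mu_i+j-i})$ has index $\mu_\ell+j-\ell\leq\mu_\ell<0$ and hence vanishes, so $U_\mu=0$ and therefore $U_\al=0$, which matches the ``unless'' clause. Combining the cases completes the proof.
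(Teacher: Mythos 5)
Your proof is correct and follows essentially the same route as the paper, which simply notes that both parts ``follow immediately from (\ref{giambelliA2}) and the alternating property of the determinant''; your write-up fills in exactly the details (the row swap with the $(r,s)\mapsto(s-1,r+1)$ offset bookkeeping, the repeated-row vanishing, and the $\mu_\ell<0$ case) that the paper leaves implicit. The only point worth adding is that in part (a) the two vectors may have different lengths, so one should note that the determinant in (\ref{giambelliA2}) is unchanged by appending trailing zero parts, allowing both determinants to be taken of a common size before swapping rows.
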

\begin{proof}
Both parts follow immediately from (\ref{giambelliA2}) and the
alternating property of the determinant.
\end{proof}

If $\la$ is any partition, clearly (\ref{giambelliA}) implies that
$U_{\la} = u_{\la} + \sum_{\mu\succ\la}a_{\la\mu} u_\mu$ where
$a_{\la\mu}\in\Z$ and the sum is over partitions $\mu$ which strictly
dominate $\la$. We deduce that the $U_{\la}$ for $\la$ a partition
form another $\Z$-basis of $\A$.

\subsection{Mirror identities}

 We will represent a partition $\la$ by its Young diagram of boxes,
 arranged in left-justified rows, with $\la_i$ boxes in row $i$. We
 write $\la\subset\mu$ instead of $\la\leq\mu$ for the containment
 relation between two Young diagrams; in this case the set-theoretic
 difference $\mu\ssm\la$ is the skew diagram $\mu/\la$. A skew diagram
 is a {\em horizontal} (resp.\ {\em vertical}) {\em strip} if it does
 not contain two boxes in the same column (resp.\ row). We write $\la
 \xrightarrow{p} \mu$ if $\mu/\la$ is a horizontal strip with $p$
 boxes.

\begin{lemma}
\label{mirrors}
Let $\la$ be a partition and $p\geq 0$ be an integer. Then we have
\begin{equation}
\label{mrs}
\sum_{\al\geq 0,\ |\al|=p} U_{\la+\al} = \sum_{\la\xrightarrow{p}\mu}U_\mu
\quad \text{ and } \quad
\sum_{\al\geq 0,\ |\al|=p} U_{\la-\al} = \sum_{\mu\xrightarrow{p}\la}U_\mu
\end{equation}
where the sums are over compositions $\al\geq 0$ with $|\al|=p$ and
partitions $\mu\supset\la$ (respectively $\mu\subset\la$) such that
$\la\xrightarrow{p}\mu$ (respectively, $\mu\xrightarrow{p}\la$).
Moreover, for every $n\geq \ell(\la)$, the identities (\ref{mrs})
remain true if the sums are taken over $\al$ and $\mu$ of length at
most $n$.
\end{lemma}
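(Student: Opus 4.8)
The plan is to fix an integer $n\geq\ell(\la)$, pad $\la$ to length $n$, and prove the length-$\leq n$ versions directly; the unrestricted identities will then follow by stabilization. Write $\rho_n=(n-1,\ldots,1,0)$ and set $\delta:=\la+\rho_n$, a strictly decreasing sequence. For an integer vector $\gamma$ of length $n$, put $b=\gamma+\rho_n$; then (\ref{giambelliA2}) reads
\[
U_\gamma=\det(u_{b_i+j-n})_{1\leq i,j\leq n}=:D(b).
\]
The crucial feature is that $D(b)$ is an \emph{alternating} function of the entries of $b$: transposing $b_i$ and $b_k$ transposes two rows of the matrix. Hence $D(b)=0$ when $b$ has a repeated entry, and if $b^+=\nu+\rho_n$ is the decreasing rearrangement of $b$ (with $\nu$ a partition, which exists whenever $b\geq 0$), then $D(b)=(-1)^\sigma U_\nu$ for the sorting permutation $\sigma$.

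First I substitute $b=\la+\al+\rho_n$ in the left-hand side of the first identity. As $\al$ ranges over compositions of length $\leq n$ with $\al\geq 0$ and $|\al|=p$, the vector $b$ ranges exactly over integer sequences with $b\geq\delta$ and $|b|=|\delta|+p$, and the summand becomes $D(b)$. Grouping these $b$ by their decreasing rearrangement $\nu+\rho_n$ gives
\[
\sum_{\al\geq 0,\ |\al|=p}U_{\la+\al}=\sum_\nu M_\nu\,U_\nu,\qquad M_\nu=\sum_{\sigma}(-1)^\sigma,
\]
where, writing $v=\nu+\rho_n$, the inner sum runs over $\sigma\in S_n$ with $v_{\sigma(i)}\geq\delta_i$ for all $i$. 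This signed count is itself a determinant: $M_\nu=\det(A)$ with $A_{ij}=1$ if $v_j\geq\delta_i$ and $0$ otherwise.

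Here the main obstacle appears: the constraint $\al\geq 0$ (equivalently $b\geq\delta$) breaks the symmetry, so one cannot cancel terms by a naive ascent-swapping involution. It will be resolved by evaluating the $0$--$1$ determinant directly. Since $v$ and $\delta$ are strictly decreasing, $A_{ij}=[\,j\leq f(i)\,]$ with $f(i)=\#\{j:v_j\geq\delta_i\}$ nondecreasing in $i$. A short row reduction evaluates this staircase determinant: $\det(A)=1$ when $f(i)=i$ for all $i$, and $\det(A)=0$ otherwise (if $f(i)<i$ for some $i$ the first $i$ rows lie in an $(i-1)$-dimensional coordinate subspace; if $f(i)\geq i$ for all $i$ with some inequality strict, subtracting consecutive rows produces a vanishing row or column). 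Finally $f\equiv\mathrm{id}$ unwinds to $v_i\geq\delta_i>v_{i+1}$, i.e.\ $\nu_i\geq\la_i\geq\nu_{i+1}$, which says $\nu/\la$ is a horizontal strip. Thus $M_\nu=1$ precisely when $\la\xrightarrow{p}\nu$ and $0$ otherwise, proving the first identity.

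The second identity follows from the same argument with the inequalities reversed: now $b=\la-\al+\rho_n\leq\delta$, any row with $b_i<0$ forces $D(b)=0$, and the analogous signed count is a staircase determinant $\det(B)$ with $B_{ij}=[\,v_j\leq\delta_i\,]$, which the reflection $j\mapsto n+1-j$ reduces to the previous case; it equals $1$ exactly when $\la_i\geq\mu_i\geq\la_{i+1}$, i.e.\ $\mu\xrightarrow{p}\la$. This establishes the length-$\leq n$ statements for every $n\geq\ell(\la)$. For the unrestricted identities I note that the right-hand sides stabilize once $n\geq\ell(\la)+1$ (a horizontal strip adds at most one row), while any $U_{\la\pm\al}$ arising from an $\al$ supported beyond position $n$ yields a $b$ with a repeated shifted entry and hence vanishes; so the length-$\leq n$ sums agree with the full sums for $n$ large, completing the proof.
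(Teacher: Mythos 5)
Your fixed\nobreakdash-$n$ argument is correct, and it is genuinely different from the paper's. The paper proves the lemma by a term-by-term cancellation: a summand $U_\nu$ is declared bad when the interlacing fails at some position, and a sign-reversing involution built from the straightening law (Lemma \ref{altlem}(a)) pairs off the bad terms, leaving exactly the horizontal strips; the length-restricted version is then a small boundary check. You instead group the alternating determinants $D(b)$ by their sorted rearrangement $\nu+\rho_n$ and evaluate the resulting signed multiplicity $M_\nu$ as a $0$--$1$ staircase determinant, which you show equals $1$ precisely when $\nu$ and $\la$ interlace and $0$ otherwise. That evaluation is correct (if $f(i)<i$ the top $i$ rows are dependent; if $f(i)\geq i$ for all $i$ with strictness somewhere, then either two rows coincide or $f$ would be strictly increasing and force $f(n)>n$), and the reflection trick for the second identity is sound. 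This is closer in spirit to classical bialternant proofs of the Pieri rule, and it is a complete proof of the ``moreover'' statements for every $n\geq\ell(\la)$.

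The gap is in the final passage to the unrestricted identities. For the second identity your reasoning is fine: if $\al_k>0$ for some $k>n\geq\ell(\la)$, the last nonzero entry of $\la-\al$ is negative and $U_{\la-\al}=0$ (a zero row). But for the first identity, the claim that any $\al$ supported beyond position $n$ ``yields a $b$ with a repeated shifted entry and hence vanishes'' is false in general and is asserted, not proved, in the regime where you need it. Take $\la=(2,1)$, $p=3$, $n=2=\ell(\la)$, $\al=(0,0,3)$: then $(2,1,3)+\rho_3=(4,2,3)$ has distinct entries, and $U_{(2,1,3)}=-U_{(2,2,2)}\neq 0$. Nonzero long terms persist and must cancel among themselves: still with $\la=(2,1)$, $p=3$, the terms $\al=(0,0,0,0,3)$ and $\al=(0,0,1,0,2)$ contribute $+U_{(2,1,1,1,1)}$ and $-U_{(2,1,1,1,1)}$ respectively. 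The true statement is that $U_{\la+\al}=0$ whenever $\ell(\al)>\ell(\la)+p$, but this needs an argument (e.g.\ a pigeonhole/chain argument: starting from the last nonzero entry $\al_k$, the value $\al_k$ collides with the entry $k-j$ of $\la+\al+\rho_k$ at position $j=k-\al_k$ unless $\al_j>0$, and iterating descends by at most $p$ in total, so some collision occurs once $k>\ell(\la)+p$). This point cannot be waved away, since the same fact is what makes the unrestricted left-hand sum a finite, well-defined element of $\A$ at all; note also that for $\ell(\al)>n$ your $n\times n$ matrix $D(b)$ is not even defined, so the vanishing must be argued in a larger staircase. To close the gap, either supply this argument (and then your stabilization logic goes through for $n\geq\ell(\la)+p$), or run the cancellation directly on the unrestricted sum via the straightening law, which is how the paper proceeds.
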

\begin{proof}
The proofs of the two identities are very similar, so we will only
discuss the second. Let us rewrite the sum $\sum_{\al\geq 0}
U_{\la-\al}$ as $\sum_{\nu\leq \la}U_\nu$, where the latter sum is
over integer sequences $\nu$ such that $\nu_i\leq \la_i$ for each
$i$ and $|\nu| = |\la|-p$.  Call any such sequence $\nu$ {\em bad} if
there exists a $j\geq 1$ such that $\nu_j < \la_{j+1}$, and let $X$ be
the set of all bad sequences. Define an involution $\iota:X\to X$ as
follows: for $\nu\in X$, choose $j$ minimal such that $\nu_j <
\la_{j+1}$, and set
\[
\iota(\nu) = (\nu_1,\ldots,\nu_{j-1},
\nu_{j+1}-1,\nu_j+1, \nu_{j+2},\ldots).
\]
Lemma \ref{altlem}(a) implies that $U_\nu + U_{\iota(\nu)} = 0$ for
every $\nu\in X$. Therefore all bad indices may be omitted from the
sum $\sum_{\nu\leq \la}U_\nu$, and this completes the
proof. Moreover, to evaluate $\sum_{\nu\leq \la}U_\nu$ in the
situation where $\nu_j=0$ for all $j>n$, notice that if the minimal
$j$ such that $\nu_j < \la_{j+1}$ is $j=n$, then $\nu_n < 0$ and
therefore $U_\nu=0$.
\end{proof}

\subsection{The Pieri rule} 
\label{pieriruleA}

For any $d\geq 1$ define the operator $R^d$ by
\[
R^d = \prod_{1\leq i<j \leq d} (1-R_{ij}).
\]
For $p>0$ and any partition $\la$ of length $\ell$, we compute 
\[
u_p\cdot U_\la = u_p\cdot R^{\ell}\, u_{\la} = R^{\ell}\, u_{(\la,p)}
= R^{\ell+1}\cdot \prod_{i=1}^\ell(1-R_{i,\ell+1})^{-1} \, u_{(\la,p)}
\]
\[
= R^{\ell+1}\cdot\prod_{i=1}^\ell(1+R_{i,\ell+1} + 
R_{i,\ell+1}^2 + \cdots)\,u_{(\la,p)} = \sum_{\al\geq 0} U_{\la+\al},
\]
where the sum is over all compositions $\al$ such that $|\al| = p$ and
$\al_j =0$ for $j > \ell+1$. Applying Lemma \ref{mirrors},
we arrive at the {\em Pieri rule}
\begin{equation}
\label{prule}
u_p\cdot U_\la = \sum_{\la\xrightarrow{p}\mu} U_\mu.
\end{equation}

Conversely, suppose that we are given a family $\{X_\la\}$ of elements
of $\A$, one for each partition $\la$, such that $X_p=u_p$ for every
integer $p\geq 0$ and the $X_\la$ satisfy the Pieri rule $X_p\cdot
X_\la = \sum_{\la\xrightarrow{p}\mu} X_\mu$. We claim then that
\[
X_\la = U_\la = \prod_{i<j}(1-R_{ij})\, u_\la
\]
for every partition $\la$. To see this, note that the Pieri rule
implies that
\begin{equation}
\label{itPrule}
U_\la + \sum_{\mu\succ\la} a_{\la\mu} \, U_\mu = 
u_{\la_1}\cdots u_{\la_\ell} = 
X_\la + \sum_{\mu \succ \la} a_{\la\mu}\, X_\mu
\end{equation}
for some constants $a_{\la\mu}\in\Z$. The claim now follows by induction
on $\la$. 

\begin{example} We have
\[
u_2\cdot U_{(3,3,1)} = U_{(5,3,1)}+U_{(4,3,2)}+U_{(4,3,1,1)}+
U_{(3,3,3)}+U_{(3,3,2,1)}.
\]
\end{example}

\subsection{Kostka numbers}
A {\em (semistandard) tableau} $T$ on the skew shape $\la/\mu$
is a filling of the boxes of $\la/\mu$ with positive integers, so that
the entries are weakly increasing along each row from left to right
and strictly increasing down each column. We can identify such a
tableau $T$ with a sequence of partitions
\[
\mu = \la^0 \xrightarrow{c_1} \la^1 \xrightarrow{c_2} \cdots
\xrightarrow{c_r}\la^r = \la
\]
such that for $1\leq i \leq r$ the horizontal strip $\la^i/\la^{i-1}$
consists of the $c_i$ boxes in $T$ with entry $i$. The composition
$c(T)=(c_1,\ldots,c_r)$ is called the {\em content} of $T$.

Let $\mu$ be a partition and $\al$ any integer vector. The equation 
\[
u_\al \,U_{\mu} = \sum_{\la} K_{\la/\mu,\al}\, U_\la
\]
summed over partitions $\la$ such that $\la\supset\mu$ defines the
{\em Kostka numbers} $K_{\la/\mu,\al}$. If $\al$ is not a composition
such that $|\al|=|\la/\mu|$ then we have
$K_{\la/\mu,\al}=0$. Otherwise, iteration of the Pieri rule shows that
$K_{\la/\mu,\al}$ equals the number of tableaux $T$ of shape $\la/\mu$
and content vector $c(T)=\al$. We deduce from equation (\ref{itPrule})
that the {\em Kostka matrix} $K=\{K_{\la,\mu}\}$, whose rows and columns
are indexed by partitions, is lower unitriangular with respect to the
dominance order.

\subsection{The Littlewood-Richardson rule}

Define the {\em Littlewood-Richardson
coefficients} to be the structure constants $c^\la_{\mu\nu}$ in the
equation
\begin{equation}
\label{scnsts}
U_\mu \cdot U_\nu = \sum_\la c^\la_{\mu\nu} \, U_\la.
\end{equation}
If $\ell=\ell(\nu)$, we compute that 
\begin{align*}
U_\mu \cdot U_\nu &= \sum_{w\in S_\ell} (-1)^w u_{w(\nu+\rho_\ell) - 
\rho_\ell} \, U_{\mu} \\
&= \sum_\la \sum_{w\in S_\ell} (-1)^w K_{\la/\mu,w(\nu+\rho_\ell) - 
\rho_\ell} \, U_\la
\end{align*}
from which we deduce that 
\begin{equation}
\label{ceq}
c_{\mu\nu}^{\la} = \sum_{(w,T)} (-1)^w
\end{equation}
where the sum is over all pairs $(w,T)$ such that $w\in S_\ell$ 
and $T$ is a tableau on $\la/\mu$ with $c(T)+\rho_\ell=w(\nu+\rho_\ell)$. 
Observe that $c(T)$ is a partition if and only if $c(T)+\rho_\ell$ is a
strict partition, in which case $c(T)+\rho_\ell=w(\nu+\rho_\ell)$ implies
that $w=1$.

For any tableau $T$, let $T_{\geq r}$ denote the subtableau of $T$
formed by the entries in columns $r$ and higher, and define $T_{>r}$
and $T_{<r}$ similarly.  We say that a pair $(w,T)$ is {\em bad} if
$c(T_{\geq r})$ is not a partition for some $r$. Let $Y$ denote the
set of bad pairs indexing the sum (\ref{ceq}), and define a sign
reversing involution $\iota : Y \to Y$ as follows. Given $(w,T)\in Y$,
choose $r$ maximal such that $c(T_{\geq r})$ is not a partition, and
let $j$ be minimal such that $c_j(T_{\geq r}) < c_{j+1}(T_{\geq
r})$. Call an entry $j$ (resp.\ $j+1$) in $T$ {\em free} if there is
no $j+1$ (resp.\ $j$) in its column.  Let $T'$ denote the filling of
$\la/\mu$ obtained from $T$ by replacing all free $j$'s (resp.\
$(j+1)$'s) that lie in $T_{<r}$ with $(j+1)$'s (resp.\ $j$'s), and
then arranging the entries of each row in weakly increasing order.
Since $c(T_{>r})$ is a partition, we deduce that $T$ contains a single
entry $j+1$ in column $r$, and no $j$ in column $r$, while
$c_j(T_{\geq r})+1 = c_{j+1}(T_{\geq r})$. It follows easily from this
that $T'$ is a tableau. We define $\iota(w,T) = (\epsilon_jw,T')$,
where $\epsilon_j$ denotes the transposition $(j,j+1)$. Since
$\epsilon_j c(T_{<r}) = c(T'_{<r})$ and $\epsilon_j(c(T_{\geq r}) +
\rho_\ell) = c(T_{\geq r})+\rho_\ell$, while $T_{\geq r}$ coincides
with $T'_{\geq r}$, it follows that $\epsilon_j(c(T)+\rho_\ell) =
c(T')+\rho_\ell$ and $\iota(w,T)\in Y$. We conclude that the bad pairs
can be cancelled from the sum (\ref{ceq}).

The above argument proves that $c_{\mu\nu}^{\la}$ is equal to the 
number of tableaux $T$ of shape $\la/\mu$ and content $\nu$ such that
$T_{\geq r}$ is a partition for each $r$. This is one among many 
equivalent forms of the {\em Littlewood-Richardson rule}.

\subsection{Duality involution}

Let $v_r = U_{(1^r)}$ for $r \geq 1$, $v_0 = 1$, and $v_r=0$ for
$r<0$.  By expanding the determinant $U_{(1^r)} = \det(u_{1+j-i})_{1
\leq i,j \leq r}$ along the first row, we obtain the identity
\begin{equation}
\label{dualid}
v_r - u_1 v_{r-1} + u_2v_{r-2} - \cdots +(-1)^r u_r = 0.
\end{equation}
Define a ring homomorphism $\omega:\A \to \A$ by setting 
$\omega(u_r) = v_r$ for every integer $r$. For any integer sequence
$\al$, let $v_\al=\prod_i v_{\al_i}$, and for any partition $\la$, set
\[
V_\la = \omega(U_\la) = \prod_{i<j}(1-R_{ij})\, v_\la.
\]
We deduce from (\ref{prule}) that the $V_\la$ satisfy the Pieri rule
\begin{equation}
\label{Vpieri}
v_p\cdot V_\la = \sum_{\la\xrightarrow{p}\mu} V_\mu.
\end{equation}
On the other hand, the Littlewood-Richardson rule easily implies that 
\begin{equation}
\label{dualpieriA}
U_{(1^p)}\cdot U_\la = \sum_\mu U_\mu
\end{equation}
summed over all partitions $\mu\supset\la$ such that $\mu/\la$ is a
{\em vertical $p$-strip}.  It follows from (\ref{Vpieri}),
(\ref{dualpieriA}), and induction on $\la$ that $V_\la = U_{\la'}$ for
each $\la$. Here $\la'$ denotes the partition which is conjugate to
$\la$, i.e.\ such that $\la'_i=\#\{j\ |\ \la_j \geq i\}$ for all $i$.
In particular, the equality $\omega(U_\la)=U_{\la'}$ proves that
$\omega$ is an involution of $\A$, a fact that can also be deduced
from (\ref{dualid}).

\subsection{Cauchy identities and skew Schur polynomials}

Define a new $\Z$-basis $t_\la$ of $\A$ by the transition equations
\begin{equation}
\label{Utf}
U_\la = \sum_\mu K_{\la,\mu}\, t_\mu.
\end{equation}
In other words, the transition matrix $M(U,t)$
between the bases $U_\la$ and $t_\la$ of $\A$ is defined to be
the lower unitriangular Kostka matrix $K$. Then 
$A:= M(t,U) = K^{-1}$ and $B:=M(u,U)=K^t$. We have
\begin{gather*}
\sum_\la t_\la \otimes u_\la = 
\sum_{\la,\mu,\nu}A_{\la\mu}B_{\la\nu}\,U_\mu \otimes U_\nu \\
= \sum_{\la,\mu,\nu}A_{\mu\la}^tB_{\la\nu}\,U_\mu\otimes U_\nu
= \sum_\mu U_\mu \otimes U_\mu
\end{gather*}
in $\A\otimes_\Z \A$, where the above sums are either formal or 
restricted to run over partitions of a fixed integer $n$.
We deduce the Cauchy identity
\begin{equation}
\label{cf}
\sum_\la U_\la \otimes U_\la = \sum_\la t_\la \otimes u_\la
\end{equation}
and, by applying the automorphism $1\otimes\omega$ to (\ref{cf}), the
 dual Cauchy identity
\begin{equation}
\label{dualcf}
\sum_\la U_\la \otimes V_{\la} = \sum_\la t_\la \otimes v_\la.
\end{equation}

For any skew diagram $\la/\mu$, define the {\em skew Schur polynomial}
$U_{\la/\mu}$ by generalizing equation (\ref{Utf}):
\[
U_{\la/\mu} := \sum_\nu  K_{\la/\mu,\nu}\, t_\nu.
\]
We have the following computation in the ring $\A\otimes_\Z \A\otimes_\Z \A$.
\begin{align*}
\sum_{\mu,\nu} U_\mu\otimes U_\nu\otimes U_\mu U_\nu
&= \sum_{\mu,\nu} U_\mu\otimes t_\nu\otimes U_\mu u_\nu
= \sum_{\la,\mu,\nu} U_\mu\otimes t_\nu\otimes K_{\la/\mu,\nu} U_\la\\
&= \sum_{\la,\mu} U_\mu\otimes U_{\la/\mu}\otimes U_\la.
\end{align*}
By comparing the coefficient of $U_\mu\otimes U_\nu\otimes U_\la$
on either end of the previous equation, we obtain
\begin{equation}
\label{skeq}
U_{\la/\mu} = \sum_\nu c_{\mu\nu}^\la\, U_\nu
\end{equation}
where the coefficients $c_{\mu\nu}^\la$ are the same as the ones in
(\ref{scnsts}). Since $\omega(U_\la) = U_{\la'}$ implies the identity
$c_{\mu\nu}^\la = c_{\mu'\nu'}^{\la'}$, we deduce from (\ref{skeq})
that
\begin{equation}
\label{skeqdual}
\omega(U_{\la/\mu}) = U_{\la'/\mu'}.
\end{equation}

\section{Symmetric functions}

\subsection{Initial definitions}
Let $x=(x_1,x_2,\ldots)$ be an infinite sequence of commuting
variables.  For any composition $\al$ we set
$x^\al=\prod_ix_i^{\al_i}$. Given $k\geq 0$, let $\Lambda^k$ denote
the abelian group of all formal power series $\sum_{|\al|=k} c_\al
x^\al\in \Z[[x_1,x_2,\ldots]]$ which are invariant under any
permutation of the variables $x_i$. The elements of $\Lambda^k$ are
called homogeneous symmetric functions of degree $k$, and the graded
ring $\Lambda=\bigoplus_{k\geq 0}\Lambda^k$ is the ring of 
symmetric functions.

For each partition $\la$ of $k$, we obtain an element $m_\la\in
\Lambda^k$ by symmetrizing the monomial $x^\la$. In other words,
$m_\la(x) = \sum_\al x^\al$ where the sum is over all distinct
permutations $\al= (\al_1,\al_2,\dots)$ of 
$\la=(\la_1,\la_2,\ldots)$. We call $m_\la$ a {\em monomial symmetric
function}. The definition implies that if $f = \sum_\al c_\al
x^\al\in \Lambda^k$, then $f = \sum_\la c_\la m_\la$. It follows that
the $m_\la$ for all partitions $\la$ of $k$ (respectively, for all
partitions $\la$) form a $\Z$-basis of $\Lambda^k$ (respectively, 
of $\Lambda$).

Let $h_r=h_r(x)$ denote the $r$-th {\em complete symmetric function},
defined by 
\[
h_r(x) = \sum_{\la\, :\, |\la|=r} m_\la(x) =  
\sum_{i_1\leq\cdots \leq i_r}x_{i_1}\cdots x_{i_r}. 
\]
We have the generating function equation
\begin{equation}
\label{gfe}
H(t)= \sum_{r=0}^\infty h_r(x)t^r = \prod_{i=1}^\infty(1-x_it)^{-1}.
\end{equation}
Let $h_\al = \prod_i h_{\al_i}$ for any integer sequence $\al$.

There is a unique ring homomorphism $\phi:\A \to \Lambda$ 
defined by setting $\phi(u_r)=h_r$ for every $r\geq 0$. For
any integer sequence $\al$, the {\em Schur function} $s_\al$ is
defined by $s_\al = \phi(U_\al)$. We have
\[
s_\al = \prod_{i<j} (1-R_{ij})\, h_\al = 
\det(h_{\al_i+j-i})_{i,j}.
\]

\subsection{Reduction and tableau formulas}
\label{tab}

Let $y=(y_1,y_2,\ldots)$ be a second sequence of variables, choose
$n\geq 1$, and set $x^{(n)}=(x_1,\ldots,x_n)$. It follows easily
from equation (\ref{gfe}) that for any integer $p$, 
\[
h_p(x^{(n)},y) = \sum_{i=0}^p h_i(x_n)\,h_{p-i}(x^{(n-1)},y). 
\]
Therefore, for any integer vector $\nu$, we have
\[
h_\nu(x^{(n)},y) = \sum_{\al\geq 0}
h_{\al}(x_n)\,h_{\nu-\al}(x^{(n-1)},y) = \sum_{\al\geq 0}
x_n^{|\al|}\,h_{\nu-\al}(x^{(n-1)},y)
\]
summed over all compositions $\al$. If $R$ denotes any raising operator
and $\la$ is any partition, we obtain
\begin{equation}
\label{gieq}
R\, h_\la(x^{(n)},y) = \sum_{\al\geq 0}
x_n^{|\al|}\,h_{R\la-\al}(x^{(n-1)},y) = \sum_{\al\geq 0}
x_n^{|\al|}\,R\,h_{\la-\al}(x^{(n-1)},y).
\end{equation}

Since $s_\la =  \prod_{i<j} (1-R_{ij})\, h_\la$, we deduce from  
(\ref{gieq}) that
\[
s_\la(x^{(n)},y) = \sum_{\al\geq 0}
x_n^{|\al|}s_{\la-\al}(x^{(n-1)},y) = \sum_{p=0}^\infty
x_n^p \sum_{|\al|=p}s_{\la-\al}(x^{(n-1)},y).
\]
Applying Lemma \ref{mirrors}, we obtain the reduction formula
\begin{equation}
\label{reduction}
s_\la(x^{(n)},y) = \sum_{p=0}^\infty x_n^p 
\sum_{\mu\xrightarrow{p}\la} 
s_\mu(x^{(n-1)},y).
\end{equation}
Repeated application of the reduction equation (\ref{reduction})
results in
\begin{equation}
\label{reductionT}
s_\la(x^{(n)},y) = \sum_{\mu\subset\la}  s_\mu(y)
\sum_{T \, \text{on} \, \la/\mu} x^{c(T)}
\end{equation}
where the first sum is over partitions $\mu\subset\la$ and the second
over all tableau $T$ of shape $\la/\mu$ with entries at most $n$. As $n$
is arbitrary, equation (\ref{reductionT}) holds with
$x=(x_1,x_2,\ldots)$ in place of $x^{(n)}$. It follows that
\[
s_\la(x,y) = \sum_{\mu\subset\la}  s_\mu(y)
\sum_{T \, \text{on} \, \la/\mu} x^{c(T)}
\]
where the second sum is over all tableau $T$ of shape $\la/\mu$.
Substituting $y=0$ proves Littlewood's tableau formula
\begin{equation}
\label{tf}
s_\la(x) = \sum_{T \, \text{on} \, \la}x^{c(T)} = 
\sum_\mu K_{\la,\mu}\,m_\mu(x).
\end{equation}
From (\ref{tf}) we deduce immediately that the $s_\la$ for $\la$ a 
partition form a $\Z$-basis of $\Lambda$, and comparing with 
(\ref{Utf}) shows that $\phi(t_\la) = m_\la$. 
It follows that the functions
$h_\la$ for $\la$ a partition also form a $\Z$-basis of $\Lambda$.

\subsection{Duality and Cauchy identities}

Let $e_r=e_r(x)$ denote the $r$-th {\em elementary 
symmetric function} in the variables $x$, so that 
\[
e_r(x) = m_{(1^r)}(x) = \sum_{i_1<\cdots < i_r}x_{i_1}\cdots x_{i_r}.
\]
The generating function $E(t)$ for the $e_r$ satisfies
\[
E(t)= \sum_{r=0}^\infty e_r(x)t^r = \prod_{i=1}^\infty(1+x_it).
\]
Since $E(t)H(-t)=1$, we obtain 
\begin{equation}
\label{dualid2}
e_r - h_1 e_{r-1} + h_2e_{r-2} - \cdots +(-1)^r h_r = 0
\end{equation}
for each $r\geq 1$. For any integer sequence $\al$, we set 
$e_\al = \prod_i e_{\al_i}$.

By comparing equations (\ref{dualid}) and (\ref{dualid2}), we deduce
that $\phi(v_r)=e_r$ for each $r$, and hence $\phi(v_\la) = e_\la$ and
$\phi(V_\la) = s_{\la'}$. The duality involution on $\A$ transfers to
an automorphism $\omega:\Lambda \to \Lambda$ which sends $h_\la$ to
$e_\la$ and $s_\la$ to $s_{\la'}$, for each partition $\la$. We deduce
that the $e_\la$ form another $\Z$-basis of
$\Lambda$.  Moreover, by applying $\phi$ to (\ref{cf}) and
(\ref{dualcf}), we obtain the usual form of the Cauchy identities
\[
\sum_\la s_\la(x)s_\la(y) = \sum_\la m_\la(x) h_\la(y)
=\prod_{i,j}\frac{1}{1-x_iy_j}
\]
and 
\[
\sum_\la s_\la(x)s_{\la'}(y) = \sum_\la m_\la(x) e_\la(y)
=\prod_{i,j}(1+x_iy_j)
\]
where the sums are taken over all partitions $\la$.

\subsection{Skew Schur functions}

Define the {\em skew Schur functions} $s_{\la/\mu}$ by 
\[
s_{\la/\mu}(x) = \phi(U_{\la/\mu}) = \sum_{\nu} K_{\la/\mu,\nu}\, m_\nu(x)
= \sum_{T \, \text{on} \, \la/\mu} x^{c(T)}.
\]
Equation (\ref{reductionT}) then implies that
\begin{equation}
\label{seq1}
s_\la(x,y) = \sum_{\mu\subset \la}s_{\la/\mu}(x)s_\mu(y)
= \sum_{\mu\subset \la}s_\mu(x)s_{\la/\mu}(y).
\end{equation}
Applying the operator $\prod_{i<j}(1-R_{ij})$ to both sides of the 
equation
\[
h_\la(x,y) = \sum_{\al\geq 0} h_\al(x) h_{\la-\al}(y)
\] 
gives 
\begin{equation}
\label{seq2}
s_\la(x,y) = 
\sum_{\al\geq 0} h_\al(x)s_{\la-\al}(y).
\end{equation}
Since $h_\al = \sum_\mu K_{\mu,\al}s_\mu$, comparing (\ref{seq1}) with
(\ref{seq2}) proves that
\begin{equation}
\label{genmir}
s_{\la/\mu} = \sum_{\al \geq 0} K_{\mu,\al}s_{\la-\al}.
\end{equation}
Observe that (\ref{genmir}) is a generalization of the second identity
in Lemma \ref{mirrors}.

Using Lemma \ref{altlem}(b) in (\ref{seq2}), we obtain that
\begin{equation}
\label{step}
s_\la(x,y) = \sum_\mu s_\mu(y) \sum_{w\in S_\ell}(-1)^w
h_{\la+\rho_\ell - w(\mu + \rho_\ell)}(x)
\end{equation}
where the first sum is over all partitions $\mu$ and $\ell =
\ell(\la)$.  Equating the coefficients of $s_\mu(y)$ in (\ref{seq1})
and (\ref{step}) proves the following generalization of the
Jacobi-Trudi identity (\ref{def2}):
\begin{equation}
\label{skewjt}
s_{\la/\mu} = \sum_{w\in S_\ell}(-1)^w
h_{\la+\rho_\ell - w(\mu + \rho_\ell)} = 
\det(h_{\la_i-\mu_j+j-i})_{i,j}. 
\end{equation}
By applying the involution $\omega$ to (\ref{skewjt}) and using
(\ref{skeqdual}), we derive the dual equation
\[
s_{\la'/\mu'} =  \det(e_{\la_i-\mu_j+j-i})_{i,j}. 
\]

\subsection{The classical definition of Schur polynomials}

In this section we fix $n$, the number of variables, and work with
integer vectors and partitions in $\Z^n$. Let $x= (x_1,\ldots,x_n)$
and set $\rho=\rho_n= (n-1,\ldots,1,0)$. For each $\al\in\Z^n$, define
\[
A_\al = \sum_{w\in S_n}(-1)^w x^{w(\al)} = 
\det(x_i^{\al_j})_{1\leq i,j \leq n}
\]
and set $\tilde{s}_\al(x) = A_{\al+\rho}/A_\rho$. Consider the
$\Z$-linear map $\A\to\Z[x_1,\ldots,x_n]$ sending $U_\la$ to
$A_{\la+\rho}$ for any partition $\la$ with $\ell(\la)\leq n$, and to
zero, if $\ell(\la)>n$. It follows from Lemma \ref{altlem}(b) that
this map sends $U_\al$ to $A_{\al+\rho}$ for any composition
$\al\in\Z^n$. Lemma \ref{mirrors} therefore implies that for any
partition $\la\in \Z^n$ and integer $r\geq 0$, we have
\begin{equation}
\label{altmrs}
\sum_{\al\geq 0} A_{\la+\al+\rho} = \sum_{\la\xrightarrow{r}\mu}A_{\mu+\rho}
\end{equation}
where the sums are over compositions $\al\geq 0$ with $|\al|=r$ and
$\ell(\al)\leq n$ and partitions $\mu$ with $\la\xrightarrow{r}\mu$
and $\ell(\mu)\leq n$. Furthermore, we have
\begin{align*}
A_{\la+\rho}\,h_r(x) 
& = \sum_{w\in S_n}(-1)^w\sum_{\al\geq 0\, :\ |\al|=r} x^{w(\la+\rho)+\al} \\ 
& = \sum_{w\in S_n}(-1)^w\sum_{\al\geq 0\, :\ 
|\al|=r} x^{w(\la+\rho)+w(\al)} \\
& = \sum_{\al\geq 0\, :\, |\al|= r} A_{\la+\al+\rho} =
\sum_{\la\xrightarrow{r}\mu}A_{\mu+\rho},
\end{align*}
by (\ref{altmrs}). Now divide by $A_\rho$ to
deduce that 
\begin{equation}
\label{Aeq}
\tilde{s}_{\la}(x) \, h_r(x) = \sum_{\la\xrightarrow{r}\mu}
\tilde{s}_{\mu}(x).
\end{equation}
Applying (\ref{Aeq}) with $\la=0$ gives $\tilde{s}_r(x) = h_r(x)$, for
every $r\geq 1$. Since the $\tilde{s}_\la(x)$ satisfy the Pieri rule,
it follows by induction on $\la$ as in \S \ref{pieriruleA} that
\[
\tilde{s}_\la(x) = \prod_{i<j}(1-R_{ij})\, h_\la(x) = s_\la(x)
\]
for each partition $\la$ of length at most $n$. We have thus
proved equation (\ref{def1}).

\end{document}